\numberwithin{equation}{section}
\newcommand\hf{\hfill}
\newcommand\ds{\displaystyle}
\renewcommand\({\left(}
\renewcommand\){\right)}
\newcommand\ganz{\mathbb Z} 
\newcommand\real{\mathbb R}
\newcommand\pol{\mathcal P}
\newcommand\conv{\text{\rm Conv}}
\newcommand\stab{\text{\rm Stab}}
\newcommand\ZT{\text{\rm ZT}}
\newcommand\ZN{\text{\rm Zon}}
\newcommand\supp{\textrm{Supp}}
\newcommand\0{\underline 0}
\newtheorem*{thm*}{Theorem}
\newtheorem{thm}{Theorem}[section]
\newtheorem{pro}[thm]{Proposition}
\newtheorem{lem}[thm]{Lemma}
\newtheorem*{cor*}{Corollary}
\theoremstyle{definition}
\theoremstyle{definition}
\theoremstyle{definition}
\theoremstyle{definition}
\begin{document}

\title{Polar Root Polytopes that are Zonotopes}

\author{Paola Cellini}

\address{Paola Cellini\\ Dipartimento di Ingegneria e Geologia\\ Universit\`a di Chieti e Pescara\\ Viale Pindaro 42, 65127 Pescara PE, Italy}

\email{pcellini@unich.it}

\author{Mario Marietti}

\address{Mario Marietti\\ Dipartimento  di Ingegneria Industriale e Scienze Matematiche\\ Universit\`a Politecnica delle Marche\\ Via Brecce Bianche, 60131 Ancona AN,  Italy}

\email{m.marietti@univpm.it}

\subjclass[2010]{17B22 (primary), 05E10, 52B20 (secondary)}
\keywords{Root system, Root polytope, Zonotope, Weyl group}

\begin{abstract}
Let $\pol_{\Phi}$ be the  root polytope  of a finite irreducible crystallographic root system $\Phi$, i.e., the convex hull of all  roots in $\Phi$. The polar of  $\pol_{\Phi}$, denoted $\pol_{\Phi}^*$, coincides with the union of the orbit of the fundamental alcove under the action of the Weyl group. In this paper, we establishes which polytopes   $\pol_{\Phi}^*$ are zonotopes and which are not. The proof is constructive.
\end{abstract}

\maketitle

\section{Introduction}
Let $\Phi$ be a finite irreducible crystallographic root system in a Euclidean space $V$ with scalar product $( \ \, , \ )$, $W$  the Weyl group of $\Phi$, and $\mathfrak g_{\Phi}$ a simple Lie algebra having $\Phi$ as root system. 
Let $\pol_{\Phi}$ be the {\it root polytope} associated with $\Phi$, i.e. the convex hull of all  roots in $\Phi$. 

\par
Motivated by the connections of the root polytope $\pol_{\Phi}$ with $\mathfrak g_{\Phi}$ (more precisely, with the Borel subalgebras of $\mathfrak g_{\Phi}$ and their abelian ideals), in \cite{CM1} we study $\pol_{\Phi}$ for a general $\Phi$.
Among other things, we give a presentation of  $\pol_{\Phi}$  as an intersection of half-spaces, and describe its faces as special subposets of the root poset, up to the action of $W$.
In~\cite{CM2}, we develop these general results, obtaining several special results for the root types $\mathrm A_n$ and $\mathrm C_n$.
One of the special properties of these two root types is that the cones on the facets of $\pol_{\Phi}$  are the closures of the regions of a hyperplane arrangement. This means that $\pol_{\Phi}$ is combinatorially dual to a zonotope (see \cite[\S 7.3]{Zieg}, or \cite[\S 2.3.1]{DecoPro}).
More precisely, if  $\mathcal H_{\pol_{\Phi}}$ is the arrangement of all the hyperplanes through  the origin containing some ($n-2$)\nobreakdash-dimensional faces of $\pol_{\Phi}$, then the complete fan associated to $\mathcal H_\pol$ is equal to the face fan associated to  $\pol_{\Phi}$. This property is satisfied  by the polytopes whose polar polytopes are  zonotopes (see \cite[\S 7.3]{Zieg}).
One of the referees of \cite{CM2} asked if the polars of  the types $\mathrm A_n$ and $\mathrm C_n$  root polytopes are actually zonotopes, and if so which.

\par
In this paper, we  answer this question. We denote by $\pol^*_{\Phi}$ the polar polytope of $\pol_{\Phi}$: we explicitly describe $\pol^*_{\Phi}$ as a zonotope  for the types $\mathrm A_n$ and $\mathrm C_n$, as well as for $\mathrm B_3$ and $\mathrm G_2$.  Moreover, we prove by a direct check that, for all other root types, the set of cones on the facets of $\pol_{\Phi}$ is not equal to the set of closures of the regions of $\mathcal H_{\pol_{\Phi}}$. Hence, for all other root types, $\pol^*_{\Phi}$ is not  a zonotope.

\par
We point out  that $\pol^*_{\Phi}$ is a natural object for the crystallographic root system $\Phi$ that can be more familiarly described in terms of alcoves and Weyl groups. 
Indeed, $\pol^*_{\Phi}$ is the union of the orbit of a fundamental alcove of $\Phi$ under the Weyl group, i.e., if we fix any basis of $\Phi$, and denote by $\mathcal A$ the corresponding fundamental alcove of the affine Weyl group associated to $\Phi$, then $\pol^*_{\Phi}=\bigcup_{w\in W} w\mathcal A$. 
Thus, $\pol^*_{\Phi}$ is a fundamental domain for the group of translations by elements in the coroot lattice of $\Phi$.

\section{Statement of results}\label{statement}

Let  $\Phi^+$ be a positive system for $\Phi$, $\Pi$ the corresponding root basis of $\Phi$, $\theta$ the highest root, and $\Omega^\vee$ the dual basis of $\Pi$ in the dual space $V^*$ of $V$, i.e. the set of fundamental coweights of~$\Phi$.  

\par 
We set \ $\Pi=\{\alpha_1, \dots, \alpha_n\}$, \ $\theta=\sum_{i=1}^nm_i\alpha_i$,  \ $\Omega^\vee=\{\omega^\vee_1, \dots, \omega^\vee_n\}$, so that  $\langle \omega^\vee_i ,  \alpha_j \rangle=\delta_{ij}$, where $\langle \ \, , \ \rangle: V^* \times V \mapsto \mathbb R$ is the natural pairing of $V^*$ and $V$,  and define 
$$
o_i=\frac{\omega^\vee_i}{m_i}, \text{ for } i=1, \dots, n.$$
Consider the  fundamental alcove of the affine Weyl group of~$\Phi$ (see \mbox{\cite[VI, 2.1--2.2]{Bou}}, or \mbox{\cite [4.2--4.3]{Hum}}):
$$
\mathcal A=\{x\in V^*\mid \langle x, \alpha \rangle \geq 0 \text{ for all } \alpha\in \Pi,\ \langle x, \theta \rangle \leq 1\}.
$$
If $x$ is any element or subset of $V$ or $V^*$, we denote by $W\cdot x$ the orbit of $x$ by the action of $W$. It is well-known, and easy to see, that $\mathcal A$ is the $n$-simplex with vertices the null vector and $o_1, \dots, o_n$ \cite[VI, Corollaire in 2.2]{Bou}, and that 
\begin{equation}\label{A}
\bigcup_{w\in W} w  \mathcal A =\{x\in V^* \mid \langle x, \beta  \rangle \leq 1 \text { for all } \beta\in \Phi\}.
\end{equation}
We denote by $\pol_{\Phi}$  the {\it root polytope} associated to $\Phi$, i.e. the convex hull 
of all roots in $\Phi$. For short, we write   $\pol$ for $\pol_{\Phi}$ when the root system is clear from the context.
It is easy to see that~$\pol$ is the convex hull of the long roots in~$\Phi$. 
Indeed, this is directly checked if the rank of~$\Phi$ is $2$. In the general
case, we observe that, since $\Phi$ is irreducible, the set of all  long roots and the set of all short roots cannot be mutually  orthogonal, hence there exist a short root $\beta$ and a long root  $\beta'$ such that $(\beta, \beta')\neq 0$. Then, $\beta$ belongs to  the convex hull of the long roots in the irreducible dihedral root system  generated by $\beta$ and $\beta'$, and since $W$ is transitive on the  short roots, any short root belongs to the convex hull of some long  roots.

We denote by  $\pol_{\Phi}^*$ the polar of the root polytope associated with $\Phi$: 
$$
\pol_{\Phi}^*:= \{ x \in V^* \mid  \langle x, v  \rangle \leq 1 \text{ for all $v \in \pol_{\Phi}$}  \},
$$
and we call it the {\it polar root polytope}. Again, for short, we write $\pol^*$ for $\pol_{\Phi}^*$. 
By definition of $\pol^*$, we obtain
\begin{equation}\label{P}
\pol^*=\{x\in V^*\mid   \langle x, \beta  \rangle \leq 1 \text { for all } \beta\in \Phi\}=
\{x\in V^*\mid  \langle x, \beta  \rangle \leq 1 \text { for all } \beta\in \Phi_\ell\},
\end{equation}
where $\Phi_\ell$ is the set of long roots in $\Phi$. 
Hence, the polar root polytope satisfies
\begin{equation}\label{PA}
\pol^*=\bigcup_{w \in W} w \mathcal A=\conv\(\bigcup_{i=1}^n W\cdot o_i\).
\end{equation}

\par
Our first result is that, for $\Phi$ of type $\mathrm A_n$, $\mathrm C_n$ (hence also for type $B_2=C_2$),  $B_3$,  and $G_2$,  the polar root polytope $\pol^*$ is a zonotope.

\par
A zonotope  is by definition the image of a cube under an affine projection. 
Let $U$ be a vector space, $v_1, \dots, v_k, p\in U$, and $S=\{v_1, \dots, v_k\}$. 
We set
$$
\ZN_p (S)= \left\{p + \sum\limits_{i=1}^k t_iv_i\mid  - \frac{1}{2}\leq t_i\leq  \frac{1}{2}\right\}
$$
and call $\ZN_p (S)$ the {\em zonotope generated by $S$ with center $p$}. Thus 
a zonotope in $U$ is a polytope of the form  $\ZN_p (S)$, for some finite subset $S$ and some vector $p$ in $U$.
\par
We prove that, for $\mathrm A_n$ and $\mathrm C_n$, the polar root polytope $\pol^*$ is the zonotope generated by the orbit of a single $o_i$ (with center the null vector $\0$),  and that a similar result holds for $\mathrm B_3$ and $\mathrm G_2$. This is done  in Section~\ref{are-zone}, where we find case free conditions for the general inclusions $\ZN_{\0} (W\cdot c\,\omega_i^\vee)\subseteq \mathcal P^*$ ($c\in \real$, $i\in \{1, \dots, n\}$), and check directly the reverse inclusions that we need in the special cases $A_n, C_n, B_3, G_2$.
\par
The following is a well-known property of the zonotopes (see  \cite[Corollary~7.18]{Zieg}).

\begin{pro}
\label{prop-zon}
Let $U$ be a vector space, $U^*$ its dual,  $S=\{v_1, \ldots, v_r\}\subseteq U\setminus\{\underline 0\}$, 
$H_i = \{v \in U^* \mid \langle v, v_i \rangle =0 \}$ for $i=1, \dots, r$, and 
$\mathcal A_S$ be the arrangement of the hyperplanes $H_1, \ldots, H_r$. 
Then the cones on the faces of $\ZN_0(S)^*$  coincide with the faces of $\mathcal A_S$.
\end{pro}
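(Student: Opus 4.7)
\smallskip
\noindent\textbf{Proof plan.} The strategy is to pass through the normal fan of the zonotope $Z:=\ZN_{\0}(S)$. Two standard facts will be used: $(i)$ whenever $\0$ lies in the interior of a polytope $P\subset U$, the face fan of the polar $P^{*}$ coincides with the normal fan of $P$, each face $F$ of $P^{*}$ corresponding via polarity to a face $G$ of $P$ with $\mathbb R_{\geq 0}\cdot F = N_{G}$ (the normal cone of $G$ in $U^{*}$); $(ii)$ the normal fan of a Minkowski sum of polytopes is the common refinement of the normal fans of the summands.

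First I would write $Z$ as the Minkowski sum of the segments $I_{i}=[-v_{i}/2,\,v_{i}/2]$ for $i=1,\dots,r$. The normal fan of $I_{i}$ in $U^{*}$ has exactly three cones: the closed halfspaces $H_{i}^{+}=\{x\mid \langle x, v_{i}\rangle\geq 0\}$ and $H_{i}^{-}=\{x\mid \langle x, v_{i}\rangle\leq 0\}$, which are the normal cones of the vertices $\pm v_{i}/2$, together with their common face $H_{i}$, which is the normal cone of the edge $I_{i}$ itself. Applying $(ii)$ then gives that the normal fan of $Z$ is the common refinement of these $r$ elementary fans.

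Next I would observe that the cones of such a common refinement are precisely the nonempty intersections $\bigcap_{i=1}^{r}C_{i}$ with $C_{i}\in\{H_{i}^{+},H_{i}^{-},H_{i}\}$, and these are exactly the closed faces of the arrangement $\mathcal A_{S}$ (they are indexed, in the natural way, by the sign vectors $(\mathrm{sgn}\,\langle x, v_{i}\rangle)_{i=1}^{r}$). Combining this with $(i)$ applied to $P=Z$, I obtain that as $F$ ranges over the faces of $Z^{*}$ the cone $\mathbb R_{\geq 0}\cdot F$ ranges exactly over the faces of $\mathcal A_{S}$, which is the claim.

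The main technical input is fact $(ii)$ above; once it is granted, the argument reduces to the combinatorial book-keeping with sign patterns just sketched. A minor point concerns the degenerate case: if $\gen(S)\neq U$, then $Z^{*}$ is unbounded along the annihilator of $\gen(S)$ in $U^{*}$, and one must restrict the polarity argument to $\gen(S)$ (and pull back), but this reduction is routine and is already built into the reference \cite[\S7.3]{Zieg}.
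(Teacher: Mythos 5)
Your argument is correct, and it is essentially the standard proof of the result the paper merely cites without proof (\cite[Corollary~7.18]{Zieg}): decompose the zonotope as a Minkowski sum of the segments $[-v_i/2,v_i/2]$, identify the normal fan of the sum with the common refinement of the fans $\{H_i^+,H_i^-,H_i\}$, i.e.\ with the face structure of $\mathcal A_S$, and transfer this to the face fan of the polar. The only point worth polishing is that the faces of $\mathcal A_S$ are indexed by \emph{realizable} sign vectors (distinct sign patterns can yield the same intersection $\bigcap_i C_i$), but this does not affect the conclusion that the set of such intersections equals the set of closed faces of the arrangement.
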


We denote by $\mathcal H_\pol$ the central hyperplane arrangement determined by the $(n-2)$-faces of $\pol$, i.e., $H\in \mathcal H_\pol$ if and only if $H$ is a hyperplane containing the null vector $\0$ and some $(n-2)$ face of $\pol$. Since the null vector $\0$ lies in interior of $\pol$, the polytopes $\pol$ and $\pol^*$ are combinatorially dual to each other, and  $\pol = (\pol^*) ^*$.  
By Proposition~\ref{prop-zon}, if $\pol^*$ is a zonotope, then the cones on the proper faces of $\pol$ should coincide with the faces of  the hyperplane arrangement $\mathcal H_\pol$. Therefore, $\pol^*$ cannot be a zonotope if some hyperplane in $\mathcal H_\pol$ meets the interior of some facet of $\pol$. We will see in Section~\ref{are-not-zone} that this happens for all root types other than $\mathrm A_n$, $\mathrm C_n$, $\mathrm B_3$, $\mathrm G_2$. 

\par
We sum up our results in the following theorem, where we number the simple roots and hence the fundamental coweights as in Bourbaki's tables  \cite{Bou}.  For any finite subset $S=\{v_1, \dots, v_k\}$, we denote by $\ZT$ the following zonotope:
$$\ZT (S)= \left\{\sum\limits_{i=1}^k t_iv_i\mid  0 \leq t_i\leq 1 \right\}.$$
If the barycenter of $S$ is the null vector $\0$, then $\ZT(S)$ coincides with $\ZN_{\0}\(S\)$ (see Section~4).

\begin{thm}\label{main}
\begin{enumerate}
\item{}
For $\Phi$ of type $\mathrm A_n$ or $\mathrm C_n$,\
$$\pol^*=\ZT(W\cdot o_1).$$
\item{} For $\Phi$ of type $\mathrm B_3$, 
$\pol^*=\ZT\(W\cdot \ds\frac{o_3}{2}\)$.
\item
For $\Phi$ of type $\mathrm G_2$, 
$\pol^*=\ZT\(W\cdot \ds\frac{o_1}{2}\)$.
\item{}
For all other root types, $\pol^*$ is not  a zonotope.
\end{enumerate}
\end{thm}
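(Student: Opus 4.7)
The plan is to treat the positive assertions (1)--(3) and the negative assertion (4) separately. For (1)--(3) the goal is a two-sided identification of $\pol^*$ with a specific zonotope, while for (4) it suffices, by Proposition~\ref{prop-zon}, to produce a single explicit obstruction in each remaining type.

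For the positive cases, my first observation is that the barycenter of any $W$-orbit is $W$-invariant, hence equals $\0$ since $W$ acts irreducibly on $V^*$; consequently the two descriptions coincide, $\ZT(W\cdot v)=\ZN_{\0}(W\cdot v)$, and I may use whichever is more convenient. The easy inclusion $\ZN_{\0}(W\cdot v)\subseteq\pol^*$ I would prove in a case-free manner, by identifying a sufficient numerical condition under which $\ZN_{\0}(W\cdot c\omega_i^\vee)\subseteq \pol^*$. Concretely, for $x=\sum_{u\in W\cdot v}t_u u$ with $|t_u|\leq \frac12$ and any long root $\beta$, one has
$$
\langle x,\beta\rangle\leq \frac12\sum_{u\in W\cdot v}|\langle u,\beta\rangle|,
$$
and since $W$ acts transitively on long roots it suffices to check the right-hand side for $\beta=\theta$. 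The remaining sum is $\stab(\theta)$-invariant and can be evaluated uniformly by a coset calculation, yielding a bound only in terms of $c$ and root system data.

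The reverse inclusion $\pol^*\subseteq \ZT(W\cdot v)$ uses \eqref{PA}: since $\pol^*=\conv(\bigcup_{i=1}^n W\cdot o_i)$, it is enough to check that each $o_i$ lies in $\ZT(W\cdot v)$, i.e.\ to write $o_i$ explicitly as $\sum_u t_u u$ with $t_u\in[0,1]$ and $u$ ranging over $W\cdot v$. For $\mathrm A_n$ and $\mathrm C_n$, with $v=o_1$, I expect such decompositions to follow from natural writings of $\omega_i^\vee$ as a sum of $i$ images of $\omega_1^\vee$ under a coset of $W$, with additional $W$-translates added so as to keep all coefficients in $[0,1]$. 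For $\mathrm B_3$ and $\mathrm G_2$ I would verify the inclusion by direct computation in Bourbaki coordinates; the extra subtlety here is that the generator $v$ is a halved fundamental coweight, which forces some of the coefficients in the decompositions of the $o_i$ to be genuinely fractional.

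For (4), Proposition~\ref{prop-zon} reduces the task to exhibiting, for each of the types $\mathrm B_n$ ($n\geq 4$), $\mathrm D_n$ ($n\geq 4$), $\mathrm E_6$, $\mathrm E_7$, $\mathrm E_8$, $\mathrm F_4$, a single hyperplane $H\in\mathcal H_\pol$ meeting the relative interior of some facet $F$ of $\pol$. Using the description of facets of $\pol$ from \cite{CM1,CM2} in terms of the root poset, I would select in each case a specific $(n-2)$-face $F'$ and facet $F$ (up to the $W$-action) such that $\gen(F')$ cuts across the interior of $F$. The hardest part of the argument lies here: the facet structure varies case by case, so the verification cannot be made fully uniform, with $\mathrm E_7$, $\mathrm E_8$ and $\mathrm F_4$ requiring the most bookkeeping; however, outer automorphisms and $W$-equivariance should cut down the number of genuinely distinct pairs $(F,F')$ to be examined, and for $\mathrm B_n$ and $\mathrm D_n$ a construction uniform in $n$ in standard coordinates should suffice.
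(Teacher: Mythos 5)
Your overall strategy is the same as the paper's: for (1)--(3), a case-free upper bound on $\langle x,\theta\rangle$ over the zonotope (using transitivity of $W$ on long roots) for one inclusion, plus explicit decompositions of the $o_i$ as subsums of the orbit for the other; for (4), an explicit hyperplane of $\mathcal H_\pol$ meeting the interior of a facet. However, two steps that you gloss over are where the real work lies, and as stated your plan does not yet contain the ideas that make them go through. First, your ``coset calculation'' of $\frac12\sum_{u}|\langle u,\theta\rangle|$ over $u\in W\cdot c\,\omega_j^\vee$ is not automatic: one must know which orbit elements pair positively with $\theta$ and what those values are. The paper's mechanism is a congruence $(\beta,\omega_j^\vee)\equiv m_j \pmod{r_j}$ for long roots $\beta$ (with $r_j=\|\theta\|^2/\|\alpha_j\|^2$), combined with the fact that $(x,\theta)=m_j$ exactly on the suborbit $\stab_W(\theta)\cdot\omega_j^\vee$; when $r_j=m_j$ this forces every non-maximal pairing to be $\leq 0$, so the sum collapses to $q_jm_j$ where $q_j=[W_0:W_0^j]$. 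Without an observation of this kind (which happens to hold precisely in the cases you need, for $j=1$ in $\mathrm A_n,\mathrm C_n,\mathrm G_2$ and $j=3$ in $\mathrm B_3$), the uniform evaluation you assert is not available.

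Second, for (4) your plan to choose a facet $F$ and an $(n-2)$-face $F'$ with $\gen(F')$ ``cutting across'' the interior of $F$ gives no concrete certificate that the hyperplane actually meets the \emph{interior}. The paper's device is that the barycenter of the standard parabolic facet $F_i$ is a positive multiple of $\omega_i$, and that every hyperplane of $\mathcal H_\pol$ has the form $w(\omega_k^\vee)^\perp$; so it suffices to exhibit $i$, $k$ and $w\in W$ with $F_i$ a facet, $k\in H_\Phi$, and $w(\omega_i)\perp\omega_k^\vee$ --- a finite root-system computation, carried out type by type in a table. You should also close a small logical gap in invoking Proposition~\ref{prop-zon}: that proposition concerns $\ZN_{\0}(S)$, so you must first argue that if $\pol^*$ is a zonotope at all it is one centered at $\0$; the paper does this by noting that $\pol$ (hence $\pol^*$) is centrally symmetric about the origin. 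Your reverse-inclusion argument for (1)--(3) and your reduction of that inclusion to checking $o_i\in\ZT(W\cdot v)$ via \eqref{PA} match the paper (which realizes $o_{k+1}=\sum_{i=0}^{k}s_i\cdots s_1(o_1)$ as a genuine $0$--$1$ subsum), and allowing fractional coefficients there, as you do, is a harmless relaxation.
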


\section{Preliminaries}

In this section, we set our further notation and collect some basic results on root systems and Weyl  groups. Some of these results are well-known (see \cite{BB}, \cite{Bou}, or \cite{Hum}), while other results are more unusual and their proofs will be sketched.

\par
Let $\Phi$ be a finite irreducible (reduced) crystallographic root system in the
real 
vector space $V$ endowed with the positive definite
bilinear form $(\ \, , \ )$. From now on, for notational convenience,  we identify $V^*$ with $V$ through the  form  $(\ \, , \ )$.

We sum up our notation on the root system and its Weyl group in the following list:
\smallskip 

{\renewcommand{\arraystretch}{1.2}
$
\begin{array}{@{\hskip-1.3pt}l@{\qquad}l}
n &  \textrm{the rank of $\Phi$}, 
\\
\Pi= \{\alpha_1, \ldots, \alpha_n\} &  \textrm{the set of simple roots}, 
\\
\Omega^\vee=\{\omega^\vee_1, \dots, \omega^\vee_n\}  &  \textrm{the set of fundamental coweights (the dual basis of $\Pi$)},
\\
\Phi^+  &  \textrm{the set of positive roots w.r.t. $\Pi$},
\\
\Phi(\Gamma) &  \textrm{the root subsystem generated by $\Gamma$ in $\Phi$, for $\Gamma\subseteq \Phi$,}
\\
\theta  & \textrm{the highest root in $\Phi$},
\\
m_i &   = \textrm{the $i$-th coordinate of $\theta$ w.r.t. $ \Pi$, i.e. $\theta=\sum_{i=1}^nm_i \alpha_i$},
\\
o_i & = \frac{\omega^\vee_i}{m_i}, \text{ for } i=1, \dots, n,
\\
\Phi^\vee &=  \{\beta^{\vee}= \frac {2\beta}{(\beta, \beta)} \mid \beta \in \Phi\}, \textrm{ the dual root system of  $\Phi$},
\\

W   &  \textrm{the Weyl group of $\Phi$},
\\
s_{\beta}   & \textrm{the reflection with respect to the root $\beta$}.
\end{array}$
\bigskip

For each specific type of the root system,  we number the simple roots and hence the fundamental coweights as in Bourbaki's tables \cite{Bou}.

\subsection{Root, coroots and partial orderings.} 
We denote by $\leq$ the usual partial ordering of $V$ determined by the positive system $\Phi^+$: $x\leq y$ if and only if $y-x=\sum_{\alpha\in \Pi} c_\alpha\alpha$ with $c_\alpha$ a nonnegative integer for all $\alpha\in \Pi$. We denote by $\leq^\vee$ the analogous ordering determined by the dual root system $\Phi^\vee$.
For $S\subseteq\Phi$,  we let $S^\vee=\{\beta^\vee\mid \beta\in S\}$. Then $\Pi^\vee$ is the basis of  
$\Phi^\vee$ corresponding to the positive system $(\Phi^+)^\vee$. We have: 
$$
x\leq^\vee y\quad\text{if and only if}\quad y-x=\sum_{\alpha\in \Pi} c_\alpha\alpha^\vee \quad \text{with } c_\alpha\in \ganz,\ c_\alpha\geq 0 \quad\text{for all }\alpha\in \Pi.
$$

\subsection{Reflections products.} 
In the following lemma, we provide a result on the product of reflections by general roots (possibly not simple).
\begin{lem}
\label{prodotti}
Let $\beta_1, \dots, \beta_k\in \Phi$
and $w=s_{\beta_1}\cdots s_{\beta_k}$. Then, for any $x\in V$, 
\begin{equation}\label{base}
w(x)=x- \sum\limits_{i=1}^k (x, \beta_i^\vee)\nu_i=x- \sum\limits_{i=1}^k (x, \beta_i)\nu_i^\vee,\quad
\text{where}
\quad
\nu_i=s_{\beta_1}\cdots s_{\beta_{i-1}} (\beta_i);
\end{equation}
\begin{equation}\label{duale}
w(x)=x- \sum\limits_{i=1}^k (x, \eta_i^\vee)\beta_i=x- \sum\limits_{i=1}^k (x, \eta_i)\beta_i^\vee,\quad
\text{where}
\quad
\eta_i=s_{\beta_k}\cdots s_{\beta_{i+1}}(\beta_i).
\end{equation} 
\end{lem}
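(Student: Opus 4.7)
The plan is to prove \eqref{base} by induction on $k$ and to obtain \eqref{duale} by applying \eqref{base} to a second factorization of $w$ in which the roles of the $\beta_i$'s and the $\nu_i$'s are interchanged.

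For \eqref{base}, the base case $k=1$ is the standard reflection identity $s_{\beta}(x)=x-(x,\beta^\vee)\beta=x-(x,\beta)\beta^\vee$. For the inductive step, write $w=s_{\beta_1}\,u$ with $u=s_{\beta_2}\cdots s_{\beta_k}$, apply the inductive hypothesis to $u(x)$, and then act by the linear map $s_{\beta_1}$ term by term; since $s_{\beta_1}\bigl(s_{\beta_2}\cdots s_{\beta_{i-1}}(\beta_i)\bigr)=\nu_i$ by definition, the expression that comes out is exactly the right-hand side of \eqref{base}. The equivalence of the two forms of \eqref{base} is a consequence of $W$-invariance of the bilinear form: since $(\nu_i,\nu_i)=(\beta_i,\beta_i)$, one has
$$
(x,\beta_i^\vee)\,\nu_i \;=\; \frac{2(x,\beta_i)}{(\beta_i,\beta_i)}\,\nu_i \;=\; (x,\beta_i)\,\nu_i^\vee.
$$

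For \eqref{duale}, the key step is the alternative factorization
$$
w \;=\; s_{\eta_k}\,s_{\eta_{k-1}}\cdots s_{\eta_1}.
$$
To obtain it, I would prove by descending induction on $j$ that $s_{\eta_j}\cdots s_{\eta_k}=s_{\beta_k}\cdots s_{\beta_j}$: setting $v=s_{\beta_k}\cdots s_{\beta_{j+1}}$, one has $\eta_j=v(\beta_j)$ by definition, so $s_{\eta_j}=v\,s_{\beta_j}\,v^{-1}$, and the inductive step is immediate. Specializing to $j=1$ yields $s_{\eta_1}\cdots s_{\eta_k}=w^{-1}$, which is equivalent to the displayed factorization of $w$. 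Applying \eqref{base} to this factorization, the $i$-th reflection root is $\eta_{k-i+1}$ and the corresponding intermediate vector is
$$
s_{\eta_k}\cdots s_{\eta_{k-i+2}}(\eta_{k-i+1}) \;=\; \bigl(s_{\beta_{k-i+2}}\cdots s_{\beta_k}\bigr)\bigl(s_{\beta_k}\cdots s_{\beta_{k-i+2}}\bigr)(\beta_{k-i+1}) \;=\; \beta_{k-i+1},
$$
where one uses the above factorization identity in inverted form together with the definition of $\eta_{k-i+1}$. Reindexing by $j=k-i+1$ gives the first equality in \eqref{duale}, and the second equality follows from $W$-invariance of the form exactly as in \eqref{base}.

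I expect the main obstacle to be the reflection bookkeeping needed to establish the alternative factorization of $w$ and to carry out the attendant index manipulations cleanly; once this factorization is in place, \eqref{duale} reduces formally to \eqref{base}.
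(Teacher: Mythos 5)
Your proposal is correct and follows essentially the same route as the paper: formula \eqref{base} by induction on $k$ via $w(x)=s_{\beta_1}\bigl(s_{\beta_2}\cdots s_{\beta_k}(x)\bigr)$, with the two forms equivalent because $\nu_i$ and $\beta_i$ have the same length, and formula \eqref{duale} obtained by using $s_{v(\beta)}=v\,s_{\beta}\,v^{-1}$ to establish the refactorization $w=s_{\eta_k}\cdots s_{\eta_1}$ with $\beta_i=s_{\eta_k}\cdots s_{\eta_{i+1}}(\eta_i)$ and then applying \eqref{base}. Your write-up simply makes explicit the descending induction and index bookkeeping that the paper leaves implicit.
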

\sloppy

\begin{proof}
The first equality in Formula~\eqref{base} is easily proved by induction computing $s_{\beta_1} (s_{\beta_2} \cdots s_{\beta_k }(x))$; the second one is clear since $\beta_i$ and $\nu_i$ have the same length, for all~$i$. 
Formula~\eqref{duale} is an application of \eqref{base} since, by definition of $\eta_i$ and the fact that $s_{s_{\beta}(\beta')}=s_{\beta} s_{\beta'} s _{\beta}$ 
for all roots $\beta$ and $\beta'$,  we have
$$
s_{\beta_h}\cdots s_{\beta_k}=s_{\eta_k}\cdots s_{\eta_h},  \quad
\textrm{for\ } h=1, \dots, k,
$$ 
hence 
$w=s_{\eta_k}\cdots s_{\eta_1}$ and $\beta_i=s_{\eta_k}\cdots s_{\eta_{i+1}} (\eta_i)$.
\end{proof}

\fussy

For $h=1, \dots, k$, if we define $w_h=s_{\beta_h}\cdots s_{\beta_k}=s_{\eta_k}\cdots s_{\eta_h}$, Formula~\eqref{duale}  yields
\begin{equation}\label{dualeh}
w_h(x)=x- \sum\limits_{i=h}^k (x, \eta_i^\vee)\beta_i=x- \sum\limits_{i=h}^k (x, \eta_i)\beta_i^\vee.
\end{equation}

\subsection{Reduced expressions.}
For any $w\in W$, we set 
$$N(w)=\{\gamma\in \Phi^+\mid w^{-1}(\gamma)<\0\}.$$
If  $w=s_{\beta_1}\cdots s_{\beta_k}$  is a reduced expression of $w$ (so $\beta_i \in \Pi$, for all $i =1, \ldots, k$), and if we 
 define $\nu_i=s_{\beta_1}\cdots s_{\beta_{i-1}} (\beta_i)$ and $\eta_i=s_{\beta_k}\cdots s_{\beta_{i+1}}(\beta_i)$ for $i=1, \ldots, k$ as in Lemma~\ref{prodotti},  then we have the following result (see \cite[VI, 1.6, Corollaire 2]{Bou}):
\begin{equation}\label{Nw}
N(w)=\{\nu_1, \dots, \nu_k\}\quad \text{ and } \quad
N(w^{-1})=\{\eta_1, \dots, \eta_k\}.
\end{equation}

\subsection{Stabilizers.}
We denote by $\mathcal C$ the fundamental chamber of $W$: 
$$
\mathcal C=\{x\in V\mid (x, \alpha)\geq 0 \text{ for all } \alpha\in \Pi\}.
$$
Formula~\eqref{base} implies directly the well-known fact that
\begin{equation}\label{stabdom}
\stab_W(x)=\langle s_{\alpha}\mid \alpha\in\Pi,\ (x, \alpha)=0\rangle,  \quad 
\text{for all } x\in \mathcal C
\end{equation}
and, as an easy consequence, the equally well-known fact that
\begin{equation}\label{stabgen}
\stab_W(x)=\langle s_\beta\mid \beta\in \Phi,\  (x, \beta)=0\rangle,\quad \text{for all } x\in V.
\end{equation}

\par
For all $j\in\{1, \dots, n\}$, we set 
$$
W^j=\langle s_{\alpha_i}\mid i\in [n]\setminus\{j\}\rangle,
$$
so that 
$$
W^j=\stab_W(\omega_j^\vee).
$$

\subsection{Images of fundamental coweights.}
Let $w=s_{\beta_1}\cdots s_{\beta_k}$ be a reduced expression of~$w$ and $\eta_i=s_{\beta_k}\cdots s_{\beta_{i+1}}(\beta_i)$, for $i=1, \ldots, k$.
Recall that, by definition, the left descents of $w$ are the simple roots in $N(w)$, and the right descents  the simple roots in $N(w^{-1})$.  
For any $j\in \{1, \dots, n\}$, if $w$ is a minimal length representative in the left coset $wW^j$, then $\alpha_j$ is the unique right descent of $w$. Hence, every reduced expression of $w$ ends with $s_{\alpha_j}$ and every reduced expression of $w^{-1}$ starts with $s_{\alpha_j}$, i.e., in our notation, 
\begin{equation}\label{discesa}
\beta_k=\eta_k=\alpha_j.
\end{equation} 
For $\gamma\in\Phi$, let 
$$
\supp(\gamma)=\{\alpha_i\in \Pi\mid (\gamma, \omega_i^\vee)\neq 0\}
$$ 
and, for all $\alpha\in \Pi$, let 
$$
M_{\alpha}=\{\gamma\in \Phi^+\mid \alpha\in \supp(\gamma)\}.
$$
It is clear that if $\supp(\gamma)\cap N(w^{-1})=\emptyset$, then $\gamma\not\in N(w^{-1})$, by the linearity of $w$. Hence, 
if $w$ is the minimal length representative in $wW^j$, we have
\begin{equation}\label{cosetminimale}
\{\eta_1, \dots, \eta_k\}\subseteq M_{\alpha_j}.
\end{equation}
Equivalently,  $(\omega^\vee_j, \eta_i)\geq 1$ for $i=1, \dots, k$. Hence, by \eqref{duale} and \eqref{dualeh}, we have
\begin{equation}\label{cosetminimale2}
w_h(\omega_j^\vee)\leq^\vee \omega_j^\vee-\beta_k^\vee \dots -\beta_h^\vee.
\end{equation}
In particular, by \eqref{discesa}, for each $w\not\in \stab_W(\omega_j^\vee)$
\begin{equation}
w (\omega_j^\vee) \leq^\vee \omega_j^\vee-\alpha_j^\vee.
\end{equation}

\section {Polar root polytopes that are zonotopes} 
\label{are-zone}

In this section, we prove items (1), (2), and (3) of Theorem~\ref{main}.

\par
By Proposition~\ref{prop-zon}, if $\pol^*$ is a zonotope, then  the cones on the proper faces of $\pol$  coincide with the faces of the hyperplane arrangement $\mathcal H_{\pol}$, and $\pol^*= \ZN_{\0}(S)$, where $S$ is a complete set of orthogonal vectors to the hyperplanes of  $\mathcal H_{\pol}$. 
Recall from \mbox{\cite[Proposition~3.2]{CM2}} that the hyperplanes in the arrangement $\mathcal H_{\pol}$ are of a very special form: there exists a subset $H_{\Phi}\subseteq \{1, \ldots, n\}$ (depending on $\Phi$) such that 
$\mathcal H_{\pol}= \{w(\omega^\vee_k)^\perp \mid w \in W , k \in H_{\Phi}\}$. The sets $H_{\Phi}$ are given in   \cite[Table 2]{CM2}. We will see that, when  $\pol^*$ is a zonotope, the set $S$ generating it is the $W$-orbit of a multiple of a single coweight.

\par
>From the definitions (see Section~2), it is clear that
$\ZT(S)= \ZN_p (S),$
with \mbox{$p= {\frac{1}{2}\sum_{s \in S} s }$}. For any $\lambda \in V$, we get
$$
\ZT(W \cdot \lambda) = \ZN_{\0} (W \cdot \lambda),
$$
since ${\frac{1}{2}\sum_{w \in W} w (\lambda) } $ is fixed by all elements in $W$ and so must be the null vector $\0$. 

\par
To prove items (1), (2), and (3) of Theorem~\ref{main}, we need the following lemmas.
\begin{lem}
\label{contiene-contenuto} Let $S$ be a $W$-stable finite subset of $V$. 
\begin{enumerate}
\item
\label{iene}
$\ZT(S)\subseteq \pol^*$ if and only if, for each $X\subseteq  S$,  $\(\sum_{x\in X} x, \theta\)\leq 1$.
\item
\label{iuto}
 If for each $i\in \{1, \dots, n\}$ there exists $X\subseteq S$ such that $\sum_{x\in X} x=o_i$, then \mbox{$\pol^*\subseteq \ZT(S)$}.
\end{enumerate}
\end{lem}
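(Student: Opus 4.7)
The plan is to treat (1) and (2) separately, both resting on simple $W$-equivariance arguments, with the real content in (1).

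For part (1), the plan is to reduce the universal quantification over all roots to a single check at $\theta$. The first ingredient is Equation~\eqref{P}, which gives $\pol^*=\{x\in V\mid (x,\beta)\leq 1 \text{ for all } \beta\in\Phi_\ell\}$: the short roots can be ignored. The second ingredient is the $W$-transitivity on long roots: every $\beta\in\Phi_\ell$ is of the form $w(\theta)$. Combining these with the $W$-stability of $S$ and the $W$-invariance of the inner product yields, for any $X\subseteq S$, the identity $(\sum_{x\in X} x,w(\theta))=(\sum_{x\in w^{-1}X} x,\theta)$, and $w^{-1}X$ ranges over all subsets of $S$ as $X$ does. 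So the condition $\ZT(S)\subseteq\pol^*$ is invariant under replacing $\theta$ by any long root.

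It then remains to show that, for a fixed root $\beta$ (say $\beta=\theta$), the inclusion $(z,\beta)\leq 1$ for all $z\in\ZT(S)$ amounts to $(\sum_{x\in X} x,\beta)\leq 1$ for all $X\subseteq S$. Writing $z=\sum_{v\in S}t_v v$ with $t_v\in[0,1]$, the linear functional $(\cdot,\beta)$ is maximized on $\ZT(S)$ by the $0/1$-choice $t_v=1$ iff $(v,\beta)>0$; i.e., the maximum is attained at $\sum_{v\in X_\beta} v$, where $X_\beta=\{v\in S\mid (v,\beta)>0\}$. Hence the ``only if'' direction is immediate, and for the ``if'' direction one notes that the universal hypothesis $(\sum_{x\in X}x,\theta)\leq 1$ for every $X\subseteq S$ in particular covers $X=X_\theta$, hence gives the maximum.

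For part (2), the plan is even shorter. Since $S$ is $W$-stable, the set of $0/1$-combinations $\ZT(S)$ is $W$-stable too (for any $w\in W$, $w(\ZT(S))=\ZT(w(S))=\ZT(S)$). The hypothesis says that each $o_i$ is a $0/1$-combination of elements of $S$, hence $o_i\in\ZT(S)$, and by $W$-stability $W\cdot o_i\subseteq\ZT(S)$ for every $i$. Since $\ZT(S)$ is convex, Equation~\eqref{PA} yields $\pol^*=\conv(\bigcup_{i=1}^n W\cdot o_i)\subseteq\ZT(S)$.

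The main (and only) obstacle is the double reduction in (1): first from $\Phi$ to $\Phi_\ell$ using \eqref{P}, then from $\Phi_\ell$ to the single root $\theta$ using $W$-transitivity and the $W$-stability of $S$. Both steps are clean, so I do not anticipate any genuine difficulty; the maximization of a linear functional on a zonotope and the convexity argument in part (2) are standard.
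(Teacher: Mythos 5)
Your proposal is correct and follows essentially the same route as the paper: both parts rest on \eqref{P}, the $W$-transitivity on long roots together with the $W$-stability of $S$, and the observation that the extreme points of $\ZT(S)$ for a linear functional are $0/1$-sums (the paper phrases this via the vertex set of $\ZT(S)$, you via direct maximization of $(\cdot,\theta)$, which is the same fact). Part (2) is likewise the paper's argument, with the convexity step made explicit.
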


\begin{proof}
(1) It is easy to see that the set of vertices of $\ZT(S)$ is a subset of $\left\{\sum_{x\in X} x \mid 
X\subset S\right\}$ (see for example \cite[\S 2.3]{DecoPro}). Hence, the claim follows from 
\eqref{P} and  the stability of $S$ under $W$, since all long roots are in the same $W$-orbit.
\par\noindent
(2) By \eqref{PA} and the stability of $S$ under $W$, the assumption that for each $i\in \{1, \dots, n\}$,  $o_i=\sum_{x\in X} x$, with  $X\subseteq S$, implies that the  set of vertices of $\pol^*$ is contained in the set  of vertices of $\ZT(S)$, hence the claim.
\end{proof}

\bigskip
For each $j\in \{1, \dots, n\}$,  we set 
$$
r_j=\frac{\|\theta\|^2}{\|\alpha_j\|^2}.
$$

\begin{lem}\label{er}
Let  $j\in \{1, \dots, n\}$.
For each long root $\beta$, we have 
$$(\beta, \omega_j^\vee)\equiv m_j\mod r_j.$$ 
\end{lem}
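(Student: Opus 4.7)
The plan is to show that $(\beta,\omega_j^\vee)-m_j$, which is the difference of the $\alpha_j$-coefficients of $\beta$ and $\theta$ in the simple-root basis (hence an integer), is divisible by $r_j$. If $\alpha_j$ is long then $r_j=1$ and the statement is vacuous, so all the substance lies in the case when $\alpha_j$ is short.

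Since $\Phi$ is irreducible, $W$ acts transitively on the set of long roots, so every long $\beta$ can be written as $\beta=w(\theta)$ for some $w\in W$. I would proceed by induction on $\ell(w)$. For the inductive step, write $w=s_{\alpha_i}w'$ with $\ell(w')=\ell(w)-1$, set $\beta'=w'(\theta)$ (again a long root), and apply $s_{\alpha_i}(x)=x-(x,\alpha_i^\vee)\alpha_i$ together with $(\alpha_i,\omega_j^\vee)=\delta_{ij}$ to obtain
\[
(\beta,\omega_j^\vee)=(\beta',\omega_j^\vee)-\delta_{ij}\,(\beta',\alpha_j^\vee).
\]
The case $i\neq j$ follows immediately from the inductive hypothesis; the case $i=j$ reduces to showing that $(\beta',\alpha_j^\vee)$ is itself divisible by $r_j$.

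The key calculation is the identity, valid because $\beta'$ is long and hence $(\beta',\beta')=(\theta,\theta)$:
\[
(\beta',\alpha_j^\vee)=\frac{2(\beta',\alpha_j)}{(\alpha_j,\alpha_j)}=\frac{(\beta',\beta')}{(\alpha_j,\alpha_j)}\cdot\frac{2(\beta',\alpha_j)}{(\beta',\beta')}=r_j\cdot(\alpha_j,(\beta')^\vee).
\]
The factor $(\alpha_j,(\beta')^\vee)$ is an integer by the crystallographic assumption on $\Phi$, which completes the inductive step.

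I do not anticipate any real obstacle. The essential observation is that the length ratio between the long root $\beta'$ and the short simple root $\alpha_j$ is exactly $r_j$; once this factor is separated out of $(\beta',\alpha_j^\vee)$, the remaining factor is automatically a Cartan integer. A slightly slicker reformulation is to observe that each simple reflection $s_{\alpha_i}$ with $i\neq j$ preserves the $\alpha_j$-coefficient of the running long root, while $s_{\alpha_j}$ changes it by $-r_j$ times an integer, so the congruence $(\beta,\omega_j^\vee)\equiv m_j\pmod{r_j}$ propagates from $\theta$ along any word in the simple reflections.
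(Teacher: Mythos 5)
Your proof is correct and follows essentially the same route as the paper: reduce to the case where $\alpha_j$ is short, then induct on the length of $w$ with $w(\theta)=\beta$, using the fact that a long root pairs with $\alpha_j^\vee$ in a multiple of $r_j$. The paper merely ``recalls'' the key fact that $(\gamma,\alpha_j^\vee)\in\{-r_j,0,r_j\}$ for long $\gamma$, whereas you derive the (weaker, but sufficient) divisibility statement from the length ratio $(\beta',\beta')/(\alpha_j,\alpha_j)=r_j$; this is a welcome filling-in of detail, not a different argument.
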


\begin{proof}
The claim is obvious if $\alpha_j$ is long, in which case $r_j=1$. 
We recall that if $\alpha_j$ is short, then $(\gamma, \alpha_j^\vee)\in \{-r_j, 0, r_j\}$  for each long root $\gamma$.   
Hence the claim follows by induction on the length of $w\in W$ such that $w(\theta)=\beta$. 
\end{proof}

For each subset $S$ of $\Pi$, we denote by $\Phi(S)$ the standard parabolic subsystem of $\Phi$ generated by $S$, and by $W(S)$ the Weyl group of $\Phi(S)$. 
We set 
$$
\Phi_0=\Phi(\Pi\cap \theta^\perp), \qquad W_0=W(\Phi_0).
$$ 
For each  $\alpha\in \Pi$, $\alpha\perp \theta$ if and only if $\alpha$ is not connected to $\alpha_0$, the extra root added to $\Pi$ in the extended Dynkin diagram of $\Phi$  (see \cite [\S  4.7]{Hum}, or \cite[Chapter VI, n$^\text{o}$ 4.3]{Bou}).  Since $\theta$ is 
in the fundamental chamber of $W$,  
$$
W_0=\stab_W(\theta).
$$

\par
For each  $j\in \{1, \dots, n\}$, we set 
$$
\Phi_0^j=\Phi((\Pi\cap \theta^\perp)\setminus \{\alpha_j\}), \qquad W_0^j=W(\Phi_0^j), \qquad q_j=[W_0: W_0^j].
$$  
It is clear that  $$W(\Phi_0^j)=\stab_{W}(\omega_j^\vee)\cap\stab_W(\theta)=\stab_{W_0}(\omega_j^\vee).
$$ 

\begin{lem}\label{ef}
Let $j\in \{1, \dots, n\}$ and $x\in W\cdot \omega_j^\vee$. Then,
$\left(x, \theta\right)=m_j$ if and only if  $x\in W_0\cdot \omega_j^\vee$. In particular, 
$$
\left|\left\{x\in W\cdot \omega_j^\vee\mid \left(x, \theta\right)=m_j\right\}\right|=q_j.
$$
\end{lem}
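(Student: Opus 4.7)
The plan is to reduce $w$ to a good reduced expression, expand $(w(\omega_j^\vee),\theta)$ via Formula~\eqref{duale}, and read off the equality case from a sum of manifestly nonnegative terms. Both the hypothesis $(w(\omega_j^\vee),\theta)=m_j$ and the conclusion $w(\omega_j^\vee)\in W_0\cdot \omega_j^\vee$ depend only on the left coset $wW^j$, since $W^j=\stab_W(\omega_j^\vee)$. I may therefore replace $w$ by its minimal length representative in $wW^j$ and fix a reduced expression $w=s_{\beta_1}\cdots s_{\beta_k}$, with the $\eta_i$ as in Lemma~\ref{prodotti}; note also that $(\omega_j^\vee,\theta)=m_j$, since $\theta=\sum_i m_i\alpha_i$. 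The ``if'' direction is then immediate: if $w\in W_0=\stab_W(\theta)$, then $(w(\omega_j^\vee),\theta)=(\omega_j^\vee,w^{-1}(\theta))=(\omega_j^\vee,\theta)=m_j$.

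For the ``only if'' direction I apply Formula~\eqref{duale} to $\omega_j^\vee$ and pair with $\theta$ to obtain
$$
(w(\omega_j^\vee),\theta)=m_j-\sum_{i=1}^{k}(\omega_j^\vee,\eta_i)\,(\beta_i^\vee,\theta).
$$
The key observation is that both factors in each summand are nonnegative. Indeed, \eqref{cosetminimale} gives $\eta_i\in M_{\alpha_j}$, so $(\omega_j^\vee,\eta_i)$ equals the coefficient of $\alpha_j$ in $\eta_i$ and is an integer $\ge 1$; and since each $\beta_i$ is a simple root and $\theta$ is dominant, $(\beta_i^\vee,\theta)=\frac{2(\beta_i,\theta)}{(\beta_i,\beta_i)}\ge 0$. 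Imposing $(w(\omega_j^\vee),\theta)=m_j$ therefore forces $(\beta_i^\vee,\theta)=0$, i.e.\ $\beta_i\in\Pi\cap\theta^\perp$, for every $i=1,\dots,k$. Hence $w\in W(\Pi\cap\theta^\perp)=W_0$ and $w(\omega_j^\vee)\in W_0\cdot\omega_j^\vee$, as desired.

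For the cardinality statement, it suffices to identify $\stab_{W_0}(\omega_j^\vee)$ with $W_0^j$, whence $|W_0\cdot\omega_j^\vee|=[W_0:W_0^j]=q_j$. Since $(\omega_j^\vee,\alpha_i)=\delta_{ij}\ge 0$, the coweight $\omega_j^\vee$ lies in the fundamental chamber of $W_0$, and applying \eqref{stabdom} inside the Weyl group $W_0$ yields
$$
\stab_{W_0}(\omega_j^\vee)=\langle s_\alpha\mid \alpha\in\Pi\cap\theta^\perp,\ (\omega_j^\vee,\alpha)=0\rangle=W\bigl((\Pi\cap\theta^\perp)\setminus\{\alpha_j\}\bigr)=W_0^j.
$$
The only delicate step in the whole argument will be the simultaneous nonnegativity of the two factors in the sum above; once that is in place, the rest is a clean chain of implications.
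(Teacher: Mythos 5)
Your proof is correct and takes essentially the same route as the paper's: the paper invokes its inequality \eqref{cosetminimale2} (which is precisely \eqref{duale} combined with the positivity $(\omega_j^\vee,\eta_i)\geq 1$ from \eqref{cosetminimale}) and then pairs with the dominant root $\theta$ to force $(\theta,\beta_i^\vee)=0$, which is exactly the expansion you carry out explicitly. The cardinality step likewise matches the identification $\stab_{W_0}(\omega_j^\vee)=W_0^j$ that the paper records just before the lemma.
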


\sloppy
\begin{proof}
It is obvious that,  if $w\in W_0$, then  $\left(w(\omega^\vee_j), \theta\right)=m_j$, since $\left(w(\omega^\vee_j), \theta\right)= \left(\omega^\vee_j, w^{-1}(\theta)\right)=\left(\omega^\vee_j, \theta\right)$. Conversely,  assume  $x\in W\cdot \omega_j^\vee$ and  $\left(x, \theta\right)=m_j$. Let $w$ be the minimal length element in  $W$ such that $x=w(\omega_j^\vee)$, and $w=s_{\beta_1}\cdots s_{\beta_k}$ be a reduced expression of $w$. Then, by \eqref{cosetminimale2},
$x\leq^\vee \omega^\vee_j- \sum\limits_{i=1}^k \beta_i^\vee$, and hence,
since  $\theta$ is in the fundamental chamber and $(\omega_i^\vee, \theta)=m_i$, we obtain $(\theta,\beta^\vee_i)=0$ for $i=1,\dots, k$, i.e. $w\in W_0$.
\end{proof}
\fussy

\begin{pro}\label{maxZT}
Let $j\in  \{1, \dots, n\}$. If $r_j=m_j$, then for all $c\in \real$, 
$$\ZT\(W\cdot c\,\omega_j^\vee\)\subseteq \pol^*
\text{ if and only if  }\ c\leq \frac{1}{q_j m_j}.$$
Equivalently, if $r_j=m_j$, then the hyperplane $\{(x, \theta)=q_j m_j\}$ is a supporting hyperplane for $\ZT\(W\cdot \omega_j^\vee\)$. 
\end {pro}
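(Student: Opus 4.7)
The plan is to apply Lemma~\ref{contiene-contenuto}(\ref{iene}) to the $W$-stable set $S=W\cdot c\,\omega_j^\vee$: the inclusion $\ZT(S)\subseteq \pol^*$ is equivalent to $\(\sum_{x\in X}x,\theta\)\leq 1$ for every $X\subseteq S$. Since this supremum is achieved by choosing $X=\{x\in S\mid (x,\theta)>0\}$, the whole question reduces to computing
$$
M_c\;:=\;\sum\limits_{\substack{x\in S\\ (x,\theta)>0}} (x,\theta)
$$
and requiring $M_c\leq 1$. We may assume $c\geq 0$, since $\ZT(W\cdot c\,\omega_j^\vee)$ coincides with $\ZN_{\0}(W\cdot c\,\omega_j^\vee)=|c|\cdot \ZN_{\0}(W\cdot \omega_j^\vee)$ and so depends only on $|c|$.

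The main step is to pin down the possible values of $(y,\theta)$ for $y\in W\cdot \omega_j^\vee$. Writing $y=w(\omega_j^\vee)$ gives $(y,\theta)=(\omega_j^\vee,w^{-1}(\theta))$. Since $\theta$ is long, $w^{-1}(\theta)$ is a long root, and Lemma~\ref{er} applied to it yields $(\omega_j^\vee,w^{-1}(\theta))\equiv m_j \pmod{r_j}$. Under the hypothesis $r_j=m_j$ this forces $(y,\theta)$ to be an integer multiple of $m_j$. On the geometric side, since $\theta=\sum_i m_i\alpha_i$ is the highest root, the $\alpha_j$-coefficient of every root lies in $[-m_j,m_j]$, so $(\omega_j^\vee,w^{-1}(\theta))\in[-m_j,m_j]$. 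Combining the two constraints yields the trichotomy
$$
(y,\theta)\in\{-m_j,\,0,\,m_j\}\qquad\text{for every }y\in W\cdot\omega_j^\vee.
$$

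By Lemma~\ref{ef} there are exactly $q_j$ elements $y\in W\cdot\omega_j^\vee$ with $(y,\theta)=m_j$; rescaling by $c\geq 0$, the positive contributions to $M_c$ sum to $M_c=c\,q_j\,m_j$, and the inequality $M_c\leq 1$ is precisely $c\leq 1/(q_j m_j)$. This establishes the first assertion. The equivalent statement about the supporting hyperplane follows by specializing to $c=1$: the linear functional $(\,\cdot\,,\theta)$ attains its maximum on $\ZT(W\cdot\omega_j^\vee)$ at the vertex $\sum_{y:\,(y,\theta)=m_j} y$, with maximum value $q_j m_j$. The only non-routine ingredient is the trichotomy displayed above, which blends the arithmetic congruence from Lemma~\ref{er} with the geometric bound coming from $\theta$ being the highest root; everything else is bookkeeping via Lemma~\ref{ef}.
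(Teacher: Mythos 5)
Your argument is correct and essentially the same as the paper's: both reduce the inclusion to bounding the linear functional $(\,\cdot\,,\theta)$ on the zonotope, use Lemma~\ref{er} with $r_j=m_j$ to force $(y,\theta)$ into multiples of $m_j$ (hence $\leq 0$ whenever it is $<m_j$), and use Lemma~\ref{ef} to count the $q_j$ orbit elements attaining $m_j$, giving the maximum $c\,q_jm_j$. The only cosmetic difference is that you obtain the upper bound $(y,\theta)\leq m_j$ from the coefficient bound on roots relative to $\theta$, whereas the paper reads it off Lemma~\ref{ef}; this changes nothing of substance.
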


\begin{proof}
By Lemma~\ref{ef},   for all $w\in W\setminus W_0$, 
 $(w(\omega_j^\vee), \theta)< m_j$, hence  by Lemma~\ref{er}, $(w(\omega_j^\vee), \theta)\leq 0$. 
It follows that  
$$\(\sum\limits_{x\in W_0\cdot \omega_j^\vee}x, \theta\)=q_j m_j,$$
and that, for any other $z\in \ZT\(W\cdot \omega_j^\vee\)$, $(z,\theta)\leq q_j m_j$. 
This proves the claim.
\end{proof}

For both types $\mathrm A_n$ and $\mathrm C_n$, $\mathcal H_{\pol}=  \{w(\omega^\vee_1)^\perp \mid w \in W \}$, and  the values of $\Phi_0$, $\Phi_0^1$, $q_1$,  $m_1$, and $r_1$ are the following:

\bigskip
$\begin{matrix}
\mathrm A_n: &\Phi_0=\Phi_0^1=\Phi(\Pi\setminus\{\alpha_1, \alpha_n\}),\quad  q_1=m_1=r_1=1,
\end{matrix}$

\bigskip
$\begin{matrix}
\mathrm C_n:  &\Phi_0=\Phi_0^1=\Phi(\Pi\setminus\{\alpha_1\}),\quad q_1=1,\quad
m_1=r_1=2.
\end{matrix}$

\bigskip
\noindent
By Proposition~\ref{maxZT}, from the previous computations, in types $\mathrm A_n$ and $\mathrm C_n$, we have 
$\ZT\(W\cdot o_1\) \subseteq \pol^*$.
In both cases, also the other inclusion holds. 
\begin{thm}
Let $\Phi$ be of type $\mathrm A_n$ or $\mathrm C_n$. Then
$$
\ZT\(W\cdot o_1\) = \pol^*.
$$
\end{thm}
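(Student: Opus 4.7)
The strategy is to combine the already-established inclusion $\ZT(W\cdot o_1)\subseteq \pol^*$ (which follows from Proposition~\ref{maxZT} together with the explicit values of $q_1, m_1, r_1$ displayed just before the theorem) with the reverse inclusion. To obtain $\pol^*\subseteq \ZT(W\cdot o_1)$, I plan to invoke Lemma~\ref{contiene-contenuto}(\ref{iuto}): it is enough to produce, for each $i\in\{1,\ldots,n\}$, a subset $X_i\subseteq W\cdot o_1$ such that $\sum_{x\in X_i} x = o_i$. The orbit $W\cdot o_1$ is small and transparent in both types, so this boils down to an explicit coordinate check.

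For type $\mathrm{C}_n$ I would work in the standard realization in $\real^n$ with simple roots $\alpha_i=e_i-e_{i+1}$ ($i<n$), $\alpha_n=2e_n$, and highest root $\theta=2e_1$. Then $m_1=2$, so $o_1=\omega_1^\vee/2=e_1/2$, and since $W$ acts by signed permutations of the $e_k$'s, the orbit $W\cdot o_1$ equals $\{\pm e_k/2 : 1\le k\le n\}$. The fundamental coweights are $\omega_i^\vee = e_1+\cdots+e_i$ for $i<n$ and $\omega_n^\vee = \tfrac12(e_1+\cdots+e_n)$, so $o_i=\tfrac12(e_1+\cdots+e_i)$ for every $i\le n-1$ and $o_n=\tfrac12(e_1+\cdots+e_n)$. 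Taking $X_i=\{e_1/2,\ldots,e_i/2\}\subseteq W\cdot o_1$ immediately gives $\sum_{x\in X_i} x = o_i$.

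For type $\mathrm{A}_n$ I would use the realization inside the hyperplane of $\real^{n+1}$ of vectors of zero sum: $\alpha_i=e_i-e_{i+1}$, $\theta=e_1-e_{n+1}$, all $m_i=1$. Writing $s=\tfrac1{n+1}\sum_{j=1}^{n+1}e_j$, we have $\omega_i^\vee=e_1+\cdots+e_i-is=o_i$, and the orbit is $W\cdot o_1=\{e_k-s : 1\le k\le n+1\}$ (the Weyl group being $S_{n+1}$ acting by coordinate permutation). The same choice $X_i=\{e_1-s,\ldots,e_i-s\}\subseteq W\cdot o_1$ sums to $(e_1+\cdots+e_i)-is=o_i$, as required.

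I do not expect any serious obstacle: Lemma~\ref{contiene-contenuto}(\ref{iuto}) reduces the problem to a concrete identity in the chosen coordinates, and in both types the orbit of $o_1$ is essentially a collection of coordinate vectors (signed or translated), which assemble into every other $o_i$ by a single chain. The only care needed is the case $i=n$ in type $\mathrm{C}_n$, where $m_n=1$ differs from the other $m_i=2$, so $o_n=\omega_n^\vee$ involves the factor $\tfrac12$; this is handled automatically by the $\tfrac12$ already present in $o_1$. After verifying these identities, Lemma~\ref{contiene-contenuto}(\ref{iuto}) yields $\pol^*\subseteq\ZT(W\cdot o_1)$, and combining with the inclusion from Proposition~\ref{maxZT} closes the proof.
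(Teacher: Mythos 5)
Your proposal is correct and takes essentially the same route as the paper: one inclusion from Proposition~\ref{maxZT} with the displayed values of $q_1,m_1,r_1$, the other from Lemma~\ref{contiene-contenuto}(\ref{iuto}) by writing each $o_i$ as a sum of $i$ distinct elements of $W\cdot o_1$ --- and your sets $X_i$ coincide with the paper's $\{w_0(o_1),\dots,w_{i-1}(o_1)\}$, $w_j=s_j\cdots s_1$. The only difference is that the paper verifies the identity $\sum_{j=0}^{k} w_j(o_1)=o_{k+1}$ by a coordinate-free induction using $(o_i,\alpha_j)=\frac{1}{m_i}\delta_{ij}$, treating $\mathrm A_n$ and $\mathrm C_n$ uniformly, whereas you check it in the standard coordinate realizations; both verifications are valid.
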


\begin{proof}
To prove that $\ZT\(W\cdot o_1\) \supseteq \pol^*$,
we show that, for each  $k\in \{1, \ldots, n\}$, $o_k$ is a sum of distinct elements  in $W\cdot o_1$ (Lemma~\ref{contiene-contenuto}, (\ref{iuto})). 

\par
Let $w_i:=s_i\cdots s_1$, for $i\in \{1, \ldots, n\}$, and 
$w_0:=e$, the identity element. Let us show by  induction on  $k$, $0\leq k< n$,  that $\sum\limits_{i=0}^{k} w_i (o_1)=o_{k+1}$. This is trivially true  for  $k=0$. If $k>0$ and $\sum\limits_{i=0}^{h} w_i (o_1)=o_{h+1}$,  $h<k<n$, then $w_h (o_1)= -o_{h}+o_{h+1}$ (where  $o_0:=\underline 0$).  We have 
$$
\sum\limits_{i=0}^{k} w_i (o_1)=o_{k}+w_k(o_1)=
o_{k}+s_k w_{k-1}(o_1)= o_{k}+s_k (-o_{k-1}+o_k)=o_k-o_{k-1}+s_k o_k.
$$ 
Since $(o_i, \alpha_j)=\frac{1}{m_i}\delta_{i,j}$,  we have
$(s_k (o_k), \alpha_j)=(o_k, s_k(\alpha_j))=
\frac{-(\alpha_k^\vee,\alpha_j)}{m_k}$,
and hence
$$
\left(\sum\limits_{i=0}^{k} w_i (o_1), \alpha_j\right)=0
$$ 
for  $j=k$ and for all $j$ with $|j-k|>1$. Only the two cases $j=k-1$ and $j=k+1$ are left out. We have 
$$
\left(\sum\limits_{i=0}^{k} w_i (o_1), \alpha_{k-1}\right)=-\frac{1}{m_{k-1}}-\frac{(\alpha_k^\vee, \alpha_{k-1})}{m_k}=0, 
$$  
(note that, for type $\mathrm C_n$, we need $k<n$), and 
$$
\left(\sum\limits_{i=0}^{k} w_i (o_1), \alpha_{k+1}\right)=
-\frac{(\alpha_k^\vee, \alpha_{k+1})}{m_k}=1.
$$ 
So we get the assertion.
\end{proof}

While the property $\ZT\(W\cdot o_1\) = \pol^*$ is a property of the root system $\Phi$, the property that  $ \pol^*$ is a zonotope is a property of the root polytope $\pol$. Hence, being   $\pol^*_{\mathrm B_3} \cong  \pol^*_{\mathrm A_3}$ and $ \pol^*_{\mathrm G_2} \cong  \pol^*_{\mathrm A_2}$, we deduce that the polar root polytopes of types $\mathrm B_3$ and $\mathrm G_2$ are also zonotopes. It turns out that, also in these two cases, the polar root polytope is the zonotope generated by the orbit of a single vector, proportional to a coweight. More precisely, the following result holds.

\begin{pro}
In types $\mathrm B_3$ and $\mathrm G_2$, we have
$$
\begin{array}{lll}
 \pol^*_{\mathrm B_3} &
= & \ZT\(W\cdot \frac{o_3}{2}\)
 \\
 \pol^*_{\mathrm G_2} &
=& \ZT\(W\cdot \frac{o_1}{2}\) 
\end{array}
$$
\end{pro}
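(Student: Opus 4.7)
The plan mirrors the proof for types $\mathrm A_n$ and $\mathrm C_n$: establish the inclusion $\ZT\left(W\cdot \frac{o_j}{2}\right)\subseteq \pol^*$ via Proposition~\ref{maxZT}, and the reverse containment via Lemma~\ref{contiene-contenuto},~(\ref{iuto}), by writing each $o_i$ as a sum of distinct orbit elements.

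First I would verify the hypotheses of Proposition~\ref{maxZT}. In $\mathrm B_3$, from $\theta=\alpha_1+2\alpha_2+2\alpha_3$ and $\alpha_3$ short with $\|\alpha_3\|^2 = \frac12\|\theta\|^2$, one has $m_3=2=r_3$. Since $(\theta,\alpha_1)=(\theta,\alpha_3)=0$, we have $\Pi\cap\theta^\perp=\{\alpha_1,\alpha_3\}$, giving $\Phi_0\cong A_1\times A_1$ and $\Phi_0^3=\Phi(\{\alpha_1\})\cong A_1$, hence $q_3=2$. Therefore $\frac{o_3}{2}=\frac{1}{4}\omega_3^\vee=\frac{1}{q_3 m_3}\omega_3^\vee$, and Proposition~\ref{maxZT} yields $\ZT\left(W\cdot \frac{o_3}{2}\right)\subseteq \pol^*$. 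The analogous computation for $\mathrm G_2$ (with $\theta=3\alpha_1+2\alpha_2$ and $\alpha_1$ short) gives $m_1=3=r_1$, $\Pi\cap\theta^\perp=\{\alpha_1\}$, $\Phi_0=\Phi(\{\alpha_1\})$ and $\Phi_0^1=\emptyset$, so $q_1=2$ and $\frac{o_1}{2}=\frac{1}{q_1 m_1}\omega_1^\vee$. Proposition~\ref{maxZT} then gives $\ZT\left(W\cdot \frac{o_1}{2}\right)\subseteq \pol^*$.

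For the reverse inclusion, by Lemma~\ref{contiene-contenuto},~(\ref{iuto}), it suffices to exhibit each $o_i$ as a sum of distinct orbit elements. For $\mathrm B_3$, use the standard realization with $\alpha_1=e_1-e_2$, $\alpha_2=e_2-e_3$, $\alpha_3=e_3$: the group $W$ acts by signed permutations, so $W\cdot \frac{o_3}{2}$ is the set of eight cube-vertices $\frac{1}{4}(\pm e_1\pm e_2\pm e_3)$, and one checks directly that
\begin{align*}
o_1 &= e_1 = \sum_{\varepsilon_2,\varepsilon_3\in\{\pm 1\}}\tfrac{1}{4}(e_1+\varepsilon_2 e_2+\varepsilon_3 e_3),\\
o_2 &= \tfrac{e_1+e_2}{2} = \tfrac{1}{4}(e_1+e_2+e_3)+\tfrac{1}{4}(e_1+e_2-e_3),\\
o_3 &= \tfrac{e_1+e_2+e_3}{2} = \tfrac{1}{4}\bigl[(e_1+e_2+e_3)+(-e_1+e_2+e_3)+(e_1-e_2+e_3)+(e_1+e_2-e_3)\bigr].
\end{align*}
For $\mathrm G_2$, since $\stab_W(\omega_1^\vee)=\langle s_{\alpha_2}\rangle$, the orbit $W\cdot\frac{o_1}{2}$ is a set of six points $h_1,\ldots,h_6$ that form the vertices of a regular hexagon centred at $\0$, which we label cyclically with $h_1=\frac{o_1}{2}$. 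The elementary identity $h_{k-1}+h_k+h_{k+1}=2h_k$ (valid in any regular hexagon centred at the origin) immediately gives $o_1=2h_1=h_6+h_1+h_2$. For $o_2$, observe that $(\theta,\alpha_1)=0$ and $(\theta,\alpha_2)=1$ force $\omega_2^\vee=\theta$, so $o_2=\frac{\theta}{2}$; a brief planar computation using $\|\omega_1^\vee\|^2=6$, $\|\omega_2^\vee\|^2=2$, and the fact that $\omega_2^\vee$ bisects the angle between $h_1$ and $h_2$ then yields $o_2=h_1+h_2$.

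The main technical point is the $\mathrm G_2$ identity $o_2=h_1+h_2$, which rests on the coincidence $\omega_2^\vee=\theta$ and on matching the norm of the sum of two adjacent hexagon vertices with $\|\theta\|/2$. Everything else follows by direct inspection of the orbits.
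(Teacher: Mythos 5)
Your proposal is correct and follows the paper's own route exactly: one inclusion via Proposition~\ref{maxZT} with the same data ($q_3=2$, $m_3=r_3=2$ for $\mathrm B_3$; $q_1=2$, $m_1=r_1=3$ for $\mathrm G_2$), and the reverse inclusion via Lemma~\ref{contiene-contenuto},~(\ref{iuto}), for which the paper merely asserts a direct check while you supply the explicit (and correct) decompositions of each $o_i$ as a sum of distinct orbit elements.
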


\begin{proof}
One inclusion follows by Proposition~\ref{maxZT} since we have:

\par
$\begin{matrix}
{\mathrm B_3}: \hf &\Phi_0=\Phi(\Pi\setminus \{\alpha_2\})\cong \mathrm A_1\times \mathrm A_{1},\hf\\   
&\Phi_0^3=\Phi(\Pi\setminus \{\alpha_2, \alpha_3\})\cong \mathrm A_1,\quad q_3=2,\quad m_3=r_3=2.\hf
\end{matrix}$

\par
$\begin{matrix}
{\mathrm G_2}: &\Phi_0=\Phi(\Pi\setminus \{\alpha_2\})\cong \mathrm A_1,\quad \Phi_0^1=\Phi(\Pi\setminus \{\alpha_1, \alpha_2\})=\emptyset,\quad  q_1=2, 
\quad m_1=r_1=3.
\end{matrix}$

\par
The other inclusion can be directly proved using Lemma~\ref{contiene-contenuto}, (\ref{iuto}).
\end{proof}

\section {Polar root polytopes that are not zonotopes} 
\label{are-not-zone}

In this section, we prove item (4)  of Theorem~\ref{main}, i.e. that, for all root systems $\Phi$ other than those of types $\mathrm A_n$, $\mathrm C_n$, $\mathrm B_3$ and $\mathrm G_2$, the polar root polytope $\pol^*$ is not a zonotope. 

\par
In fact,  we show
that,  for all such root systems, the set of cones on the facets of the root polytope $\pol$ is not equal to the set of closures of the regions of the hyperplane arrangement $\mathcal H_\pol$. 
This is enough to show that $\pol^*$ cannot be a zonotope by Proposition~\ref{prop-zon}, noting that:

\begin{enumerate}
\item being the convex hull of the long roots in $\Phi$, $\pol$ is centrally symmetric with respect to the null vector $\0$, 
\item the polar of a polytope which is  centrally symmetric with respect to $\0$  is centrally symmetric with respect to  the null vector in the dual space,
\item every zonotope which is centrally symmetric with respect to the null vector $\0$ is of the form $\ZN_{\0}(S)$, for an appropriate set $S$.
\end{enumerate}

\par
Recall that $\mathcal H_\pol$ is the central hyperplane arrangement determined by the $(n-2)$-faces of $\pol$, i.e., $H\in \mathcal H_\pol$ if and only if $H$ is a hyperplane containing $\0$ and some $(n-2)$-face of $\pol$. 
We show that some hyperplane in $\mathcal H_\pol$ meets the interior of some facet of $\pol$, for all root types other than $\mathrm A_n$, $\mathrm C_n$, $\mathrm B_3$, $\mathrm G_2$. 
More precisely, for each of the root systems $\Phi$ we are considering, we point out an hyperplane of $\mathcal H_\pol$ containing the barycenter of a facet of $\pol$ and hence cutting that facet. 

\par
For the reader convenience, we recall from \cite{CM1} that $\pol$ has certain  distinguished faces, that we call {\em standard parabolic}, which give a complete set of representatives of the $W$\nobreakdash-\,orbits of faces of $\pol$ (see \cite[Corollary~4.3 and Theorem~5.11]{CM1}). For each $I \subseteq \{1, \ldots, n\}$, we let  
$$
F_I:=\conv\{\alpha\in \Phi^+\mid ( \omega_i^{\vee},  \alpha )= m_i, \text{ for all $i \in I$} \}
$$ 
be the standard parabolic face associated with $I$.
Here we need  the standard parabolic facets, which are all of the form  $F_i:= F_{  \{ i \} }$, for  $i \in \{1, \ldots, n\}$ (not all standard parabolic faces indexed by singletons are facets but all standard parabolic faces which are facets are  indexed by singletons). The numbers $i$ such that $F_i$ are facets are those such that the extended Dynkin diagram is still connected after removing $\alpha_i$ (see \cite[Section~5]{CM1}).  Moreover, we will use the fact that the barycenter of a standard parabolic facet $F_i$ is a multiple of the corresponding fundamental weight $\omega_i$ (see \cite[Lemma~4.2]{CM1}). 

\par
As we already recalled in Section~4,  there exists a subset $H_{\Phi}\subseteq \{1, \ldots, n\}$ such that 
$\mathcal H_{\pol}= \{w(\omega^\vee_k)^\perp \mid w \in W , k \in H_{\Phi}\}$. We call {\em standard hyperplanes} the hyperplanes $(\omega^\vee_k)^\perp$,  $k \in H_{\Phi}$. 

\par
For each irreducible root type  other than $\mathrm A_n$, $\mathrm C_n$, $\mathrm B_3$, $\mathrm G_2$, 
we will exhibit a standard hyperplane containing the barycenter of a facet. 
Since each facet is of the form $wF_i$, and the barycenter of $w F_i$ is a scalar multiple of $w(\omega_i)$, it suffices to find $i,k \in \{1, \dots, n\}$ and $w\in W$  such that: $F_i$ is a facet,  $(\omega_k^\vee)^\perp$ is a standard hyperplane, and 
$w(\omega_i)\perp \omega_k^\vee$. 
In the following table, beside each root type other than $\mathrm A_n$, $\mathrm C_n$, $\mathrm B_3$ and $\mathrm G_2$, in the first row we list all the standard parabolic facets and all the standard hyperplanes; in the further rows we write down explicitly a particular triple  $i,k,w$ such that 
$i,k \in \{1, \dots, n\}$, $w\in W$  and $w(\omega_i)\perp \omega_k^\vee$.

\bigskip

\begin{center}
{{
\renewcommand{\arraystretch}{1.2}
\renewcommand{\tabcolsep}{0.2cm}
\begin{longtable}{|c|l|}
\hline 
$\begin{array}{c}
\mathrm B_n\\
n\geq 4
\end{array} $
& \vbox{\hsize=12cm\noindent
\begin{tabular}{l}
$F_1$, $F_n$, \qquad 
$(\omega_1^\vee)^\perp$, $(\omega_n^\vee)^\perp$\\
$\omega_1=\alpha_1+\cdots+\alpha_n$, \qquad  $s_1(\omega_1)=(\omega_1-\alpha_1)\perp \omega_1^\vee$
\end{tabular}
}\\
\hline 
$\begin{array}{c}
\mathrm D_n\\
n\geq 4
\end{array} $
&
\vbox{\hsize=12cm\noindent
\begin{tabular}{l}
 $F_1$, $F_{n-1}$,  $F_n$, \qquad 
$(\omega_1^\vee)^\perp$, $(\omega_{n-1}^\vee)^\perp$, $(\omega_n^\vee)^\perp$
\\
$\omega_1=\alpha_1+\cdots+\alpha_n$,\qquad 
$ s_1(\omega_1)=(\omega_1-\alpha_1)\perp \omega_1^\vee$
\end{tabular}
}\\
\hline
$\mathrm  E_6$ 
&
\vbox{\hsize=14cm\noindent
\begin{tabular}{l}
 $F_1$, $F_6$, \qquad
$(\omega_1^\vee)^\perp$, $(\omega_{6}^\vee)^\perp$
\\
$\omega_1=\frac{4}{3}\alpha_1+\alpha_2+\frac{5}{3}\alpha_3+2\alpha_4+\frac{4}{3}\alpha_5+\frac{2}{3}\alpha_6$
\\
$s_2s_4s_3s_1(\omega_1)=\omega_1-\alpha_1-\alpha_3-\alpha_4-\alpha_2\perp \omega_2^\vee$
\end{tabular}
}\\
\hline
$\mathrm E_7$ 
&
\vbox{\hsize=14cm\noindent
\begin{tabular}{l}
 $F_2$, $F_7$, \qquad
$(\omega_1^\vee)^\perp$, $(\omega_{2}^\vee)^\perp$
\\
$\omega_7=\alpha_1+\frac{3}{2}\alpha_2+2\alpha_3+3\alpha_4+\frac{5}{2}\alpha_5+2\alpha_6+\frac{3}{2}\alpha_7$
\\
$s_1s_3s_4s_5s_6s_7(\omega_7)=\omega_7-\alpha_7-\alpha_6-\alpha_5
-\alpha_4-\alpha_3-\alpha_1\perp \omega_1^\vee$
\end{tabular}
}\\
\hline
$\mathrm  E_8$ 
&
\vbox{\hsize=14cm\noindent
\begin{tabular}{l}
 $F_1$, $F_2$, \qquad
$(\omega_2^\vee)^\perp$, $(\omega_{8}^\vee)^\perp$
\\
$\omega_1=4\alpha_1+5\alpha_2+7\alpha_3+10\alpha_4+8\alpha_5+
6\alpha_6+4\alpha_7+2\alpha_8$\\
$s_8s_7s_6s_5s_4s_3s_1(s_2s_4s_3s_5s_4s_2s_6s_5s_4s_3)s_1(\omega_1)=$\\
$s_8s_7s_6s_5s_4s_3s_1(s_2s_4s_3s_5s_4s_2s_6s_5s_4s_3)
(\omega_1-\alpha_1)=s_8s_7s_6s_5s_4s_3s_1(\omega_1-\theta_6)=$\\
$(\omega_1-
\alpha_1+\alpha_3+\alpha_4+\alpha_5+\alpha_6+\alpha_7+\alpha_8)-
(\theta_6-\alpha_7-\alpha_8)\perp\omega_8^\vee$
\\
$[${\it here $\theta_6$ is the highest root of the type $E_6$ root system generated by $\alpha_1\dots, \alpha_6$}$]$
\end{tabular}
}\\
\hline
$\mathrm F_4$ 
&
\vbox{\hsize=14cm\noindent
\begin{tabular}{l}
$F_4$, \qquad $(\omega_4^\vee)^\perp$ 
\\
$\omega_4=\alpha_1+2\alpha_2+3\alpha_3+2\alpha_4$
\\
$s_4s_3(s_2s_3s_4)(\omega_4)=s_4s_3(\omega_4-\alpha_4-\alpha_3-\alpha_2)=
\omega_4-2\alpha_4-2\alpha_3-\alpha_2\perp\omega_4^\vee$
\end{tabular}
}\\
\hline
\end{longtable}
}}
\end{center}

\end{document}